\theoremstyle{plain}
\newtheorem{thm}{Theorem}[section]
\newtheorem{prop}[thm]{Proposition}
\newtheorem{lemma}[thm]{Lemma}
\theoremstyle{definition}
\newtheorem{defn}[thm]{Definition}
\theoremstyle{remark}
\newtheorem*{rmk}{Remark}
\newtheorem*{rmks}{Remarks}
\numberwithin{equation}{section}
\setlist{nosep}
\setlist{noitemsep}
\lstdefinelanguage{Sage}[]{Python}
{morekeywords={False,sage,True},sensitive=true}
\newcommand{\C}{\mathbb{C}}
\renewcommand{\H}{\mathbb{H}}
\newcommand{\Z}{\mathbb{Z}}
\newcommand{\N}{\mathbb{N}}
\newcommand{\R}{\mathbb{R}}
\newcommand{\dd}{\mathrm{d}}
\newcommand{\slz}{{\text {\rm SL}}_2(\mathbb{Z})}
\DeclareMathOperator{\re}{Re}
\newcommand{\vt}[1]{\left\lvert #1 \right\rvert}
\renewcommand{\div}{\mathrm{div}}
\newcommand{\Qc}{\mathcal{Q}}
\title{On a divisor modular form and a theta lift}
\author{Andreas Mono}
\author{Larry Rolen}
\address{Department of Mathematics, 1326 Stevenson Center, Vanderbilt University, Nashville, TN 37240, USA}
\email{andreas.mono@vanderbilt.edu}
\email{larry.rolen@vanderbilt.edu}
\author{Johann Stumpenhusen}
\address{Department Mathematik/Informatik, Abteilung Mathematik, Universit\"at zu K\"oln, Weyertal 86--90, 50931 Cologne, Germany}
\email{jstumpen@math.uni-koeln.de}
\date{\today}
\begin{document}

\begin{abstract}
In $1975$, Zagier introduced the highly influential hyperbolic Poincar{\'e} series $f_{k,D}$. We connect the divisor modular form of $f_{k,D}$ to a new weak Maa{\ss} form $\omega_{k+1,D}$. Furthermore, we show that the generating function of $\omega_{k+1,D}$ has the same modularity properties as Kohnen and Zagier's fruitful theta kernel generating the $f_{k,D}$'s. This yields a new theta lift.
\end{abstract}

\subjclass[2020]{11F11 (Primary); 11E16, 11F27 (Secondary)}

\keywords{Indefinite theta series, Modular forms, Theta lifts, Weak Maa{\ss} forms}

\thanks{The first author received support for this research by an AMS--Simons Travel Grant. The second author's work was supported by a grant from the Simons Foundation (853830, LR)}

\maketitle

\section{Introduction and Statement of Results}

Throughout, we let $2 < k \in 2\N$, $D \in \N$ be a discriminant, and
\begin{align*}
\Qc_D \coloneqq \left\{Q(X,Y)= aX^2+bXY+cY^2 \colon a,b,c \in \Z, \ b^2-4ac = D\right\}.
\end{align*}
In 1975, Zagier \cite{zagier75}*{Appendix 2} introduced the function
\begin{align*}
f_{k,D}(z) \coloneqq \sum_{Q \in \Qc_D} \frac{1}{Q(z,1)^k}, \qquad z \in \H \coloneqq \left\{z = x+iy \in \C \colon y > 0\right\},
\end{align*}
which turns out to be a weight $2k$ cusp form for $\slz$ (see Definition \ref{defn:modularforms}). Soon after that, Katok \cite{katok} showed that $f_{k,D}$ can be written as a hyperbolic Poincar{\'e} series by averaging over $z^{-k}$, which is the constant term in the hyperbolic expansion of a modular form. Such expansions go back to Petersson \cite{pet43}, and an excellent survey on such expansions can be found in \cite{IOS}. For this reason, the $f_{k,D}$-functions generate the space of weight $2k$ cusp forms for varying $D$, paralleling the parabolic situation of the usual holomorphic Poincar{\'e} series of exponential type (see \eqref{eq:Pdef}). Bengoechea \cite{beng13} studied the $f_{k,D}$ functions for negative discriminants $D$, which have poles at the corresponding CM points.

Let
\begin{align*}
j(z) \coloneqq q^{-1} + 744 + O(q), \qquad q \coloneqq e^{2\pi i z},
\end{align*}
be Klein's modular invariant for $\slz$ and, for every $n \in \N_0$, let $j_n$ be the unique modular function for $\slz$ having a Fourier expansion of the form $q^{-n} + O(q)$. The first few are explicitly given by
\begin{align*}
j_0(z) = 1, \qquad j_1(z) = j(z) - 744, \qquad j_2(z) = j(z)^2 - 1488j(z) + 159768,
\end{align*}
and the $j_n$ generate the space of weakly holomorphic modular functions for $\slz$ (see Definition \ref{defn:modularforms}). For example, the functions $j_n$ were central to Zagier's \cite{zagier02} work on traces of singular moduli. It is natural to consider the generating function
\begin{align} \label{eq:Hzdef}
H_z(\tau) &\coloneqq \sum_{n \geq 0} j_n(z)e^{2\pi i n \tau}, \qquad \tau = u + iv \in \H.
\end{align}
If $v > y$, Asai, Kaneko, and Ninomiya \cite{askani} established the expansion
\begin{align*}
H_z(\tau) = \frac{\frac{1}{2\pi i} \frac{\partial }{\partial\tau} \left(j(\tau)\right)}{j(z)-j(\tau)},
\end{align*}
which shows that $H_z(\tau)$ is a meromorphic modular form of weight $2$ for $\slz$ (see Definition \ref{defn:modularforms}). Their identity is equivalent to the denominator formula for the Monster Lie algebra.

Let $f$ be a weight $k$ meromorphic modular form (see Definition \ref{defn:modularforms}). The \emph{divisor modular form of $f$} is defined as
\begin{align*}
f^{\div} (\tau) \coloneqq \sum_{z \in \slz \backslash \H} \frac{2}{\vt{\mathrm{Stab}_z(\slz)}} \ \mathrm{ord}_z(f) H_z(\tau).
\end{align*}
More explicitly, we have (using \eqref{eq:Moebius})
\begin{align*}
\frac{2}{\vt{\mathrm{Stab}_z(\slz)}} = \begin{cases}
\frac{1}{2} & \text{if } z \sim i, \\
\frac{1}{3} & \text{if } z \sim e^{\frac{2\pi i}{3}}, \\
1 & \text{else},
\end{cases}
\end{align*}
where $z \sim z'$ indicates that $z$ and $z'$ lie in the same orbit of $\H$ under the action of $\slz$. By definition, the divisor modular form $f^{\div}$ is a weight $2$ meromorphic modular form detecting the zeros and poles of $f$. If $f$ is normalized such that its first non-zero Fourier coefficient is equal to $1$, then Bruinier, Kohnen, and Ono \cite{brukoon}*{Theorem 1.1} showed that
\begin{align} \label{eq:brukoon}
\frac{1}{2\pi i} \frac{\partial f}{\partial z} = \frac{k}{12}E_2 f - f f^{\div}, \qquad E_2(z) \coloneqq 1 - 24\sum_{n=1}^\infty \sum_{d \mid n} d q^n.
\end{align}
Recently, Bringmann, Kane, L{\"o}brich, Ono and the second author \cite{brikaloeonro} generalized the $j_n$ to $\Gamma_0(N)$ and showed that their generalizations are also related to divisor modular forms.

Consequently, it is natural to investigate $f_{k,D}^{\div}$. To this end, we introduce the quantity
\begin{align*}
Q_{z} \coloneqq \frac{1}{y}\left(a\vt{z}^2+bx+c\right) \in \R, \qquad Q = [a,b,c] \in \Qc_D,
\end{align*}
which encodes the hyperbolic geodesic 
$
S_Q \coloneqq \left\{z \in \H \colon Q_z = 0\right\}.
$
The endpoints of the geodesic $S_Q$ are the two roots of $Q(z,1)$. Since $D > 0$ throughout, these roots are contained in $\R \cup \{\infty\}$ (if $a=0$ then $b \neq 0$ and the two roots are $-\frac{c}{b}$ and $\infty$). Hence, the function
\begin{align*}
\omega_{k+1,D}(z) \coloneqq \sum_{Q \in \Qc_D} \frac{Q_{z}}{Q(z,1)^{k+1}}
\end{align*}
has no poles on $\H$ and converges absolutely for $k > 2$ (see Proposition \ref{prop:omegaconv}). 

Roughly speaking, weak Maa{\ss} forms are certain real-analytic generalizations of (complex-analytic) modular forms. In particular, they satisfy the same modular transformation laws as modular forms. Our definition of weak Maa{\ss} forms follows Bruinier and Funke \cite{brufu02}, see Definition \ref{defn:Maassforms} below. Our first main result collects the properties of $\omega_{k+1,D}$.
\begin{thm}\label{thm:OmegaFirstThm}
Let $2 < k \in 2\N$ and $D \in \N$ be a discriminant.
\begin{enumerate}[label={\rm (\roman*)}]
\item The function $\omega_{k+1,D}$ is a weak Maa{\ss} form of weight $2k+2$ for $\slz$ and eigenvalue $2k$ under the hyperbolic Laplace operator $\Delta_{2k+2}$ (defined in \eqref{eq:Deltadef}). Furthermore, it satisfies
\begin{align*}
\lim_{z \to i\infty} \omega_{k+1,D}(z) = 0.
\end{align*}
\item A splitting of $\omega_{k+1,D}$ into a holomorphic and a non-holomorphic part is given by
\begin{align*}
\omega_{k+1,D}(z) = - i\sum_{Q \in \Qc_D}\frac{Q'(z,1)}{Q(z,1)^{k+1}} + \frac{1}{y}f_{k,D}(z), \qquad Q'(z,1) \coloneqq \frac{\dd}{\dd z} Q(z,1).
\end{align*}
\item Let $c_{f_{k,D}}$ be the first non-vanishing Fourier coefficient of $f_{k,D}$ in its Fourier expansion about $i\infty$ (see Zagier \cite{zagier75}*{p.\ 45}). Let
\begin{align*}
E_2^*(z) \coloneqq E_2(z) - \frac{3}{\pi y}
\end{align*}
be the non-holomorphic Eisenstein series of weight $2$. Then, we have
\begin{align*}
\frac{1}{c_{f_{k,D}}}f_{k,D}^{\div}(z) = \frac{k}{2\pi} \frac{\omega_{k+1,D}(z)}{f_{k,D}(z)} + \frac{k}{6} E_2^*(z).
\end{align*}
\end{enumerate}
\end{thm}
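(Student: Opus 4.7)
The plan is to prove (ii) first, since the resulting splitting streamlines both (i) and (iii). The key ingredient is the pointwise algebraic identity $Q_z = -iQ'(z,1) + Q(z,1)/y$, which one verifies by decomposing $Q(z,1) = az^2+bz+c$ and $Q'(z,1) = 2az+b$ into real and imaginary parts: the imaginary parts of the two right-hand summands cancel, while the real parts reassemble to $(a\vt{z}^2 + bx + c)/y$. Dividing through by $Q(z,1)^{k+1}$ and summing over $Q \in \Qc_D$, which is legitimate because Proposition \ref{prop:omegaconv} provides absolute convergence of $\omega_{k+1,D}$ (and $f_{k,D}$ converges absolutely as well), gives the formula in (ii).

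With (ii) in hand, part (i) proceeds in three steps. Weight-$(2k+2)$ modularity is straightforward: $\Qc_D$ is stable under the right action $Q \mapsto Q \circ M$, one has $Q(Mz,1) = (\gamma z+\delta)^{-2}(Q\circ M)(z,1)$, and the companion identity $Q_{Mz} = (Q \circ M)_z$ drops out cleanly from the invariant $\vt{Q(z,1)}^2 = y^2(Q_z^2 + D)$. To compute the Laplace eigenvalue I apply $\Delta_{2k+2}$ to the splitting of (ii): the holomorphic summand $-i\sum_Q Q'(z,1)/Q(z,1)^{k+1}$ is killed by $\Delta_{2k+2}$, and a short Leibniz calculation using $\Delta_\kappa = -4y^2\partial_z\partial_{\bar z}+2i\kappa y\partial_{\bar z}$ produces $\Delta_{2k+2}(f_{k,D}/y) = 2if_{k,D}' + 2k\,f_{k,D}/y$. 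Since term-by-term differentiation gives $f_{k,D}' = -k\sum_Q Q'(z,1)/Q(z,1)^{k+1}$, the $2if_{k,D}'$ contribution is exactly $2k$ times the holomorphic piece, and reassembling yields eigenvalue $2k$. Finally, $\omega_{k+1,D}(z) \to 0$ as $z \to i\infty$ because $f_{k,D}/y \to 0$ (a bounded cusp form divided by $y$) and each $Q'(z,1)/Q(z,1)^{k+1}$ decays at least as fast as $y^{-(k+1)}$, with uniform summability again controlled by Proposition \ref{prop:omegaconv}.

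For part (iii), I apply the Bruinier--Kohnen--Ono identity \eqref{eq:brukoon} (accounting for the normalization factor $c_{f_{k,D}}$) to recast the statement as $\frac{1}{c_{f_{k,D}}} f_{k,D}^{\div} = \frac{k}{6} E_2 - \frac{1}{2\pi i}(f_{k,D}'/f_{k,D})$. Substituting $f_{k,D}' = -ik\,\omega_{k+1,D} + (ik/y)\,f_{k,D}$ from (ii) turns the logarithmic derivative into $-\frac{k}{2\pi}(\omega_{k+1,D}/f_{k,D}) + \frac{k}{2\pi y}$. Replacing $E_2$ by $E_2^* + 3/(\pi y)$ absorbs the $\frac{k}{2\pi y}$ correction and yields the asserted identity. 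The principal obstacle is the Laplacian computation in (i): one must keep the constants from the derivatives of $1/y$ and the weight factors straight so that the $2if_{k,D}'$ byproduct cancels exactly against the holomorphic summand with the correct factor of $k$ to leave eigenvalue precisely $2k$. Elsewhere the argument is essentially bookkeeping once the absolute convergence supplied by Proposition \ref{prop:omegaconv} is in force.
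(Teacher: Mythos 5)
Your proposal is correct, and its overall architecture matches the paper's: the splitting in (ii) is exactly the identity of Lemma \ref{lem:Qtricks} (iv) divided by $Q(z,1)^{k+1}$ and summed, modularity in (i) follows from the compatibility of the $\slz$-actions as in Lemma \ref{lem:ModularBehaviour}, the decay at $i\infty$ is controlled by the estimate of Proposition \ref{prop:omegaconv} together with $f_{k,D}$ being a cusp form, and (iii) is the same substitution of $f_{k,D}' = -ik\,\omega_{k+1,D} + (ik/y)f_{k,D}$ into the Bruinier--Kohnen--Ono formula followed by trading $E_2$ for $E_2^*$. The one place you genuinely diverge is the Laplace eigenvalue: the paper factors $\Delta_{2k+2} = -\xi_{-2k}\xi_{2k+2}$ and computes termwise on $Q_z/Q(z,1)^{k+1}$, obtaining along the way the identity \eqref{eq:xiomega} (which the authors later point to as an independent link between $\omega_{k+1,D}$ and $f_{k,D}$), whereas you apply $\Delta_{2k+2}$ to the splitting from (ii), using that the operator annihilates holomorphic functions and the Leibniz computation $\Delta_{2k+2}(f_{k,D}/y) = 2if_{k,D}' + 2k f_{k,D}/y$; I checked this identity and the subsequent cancellation $2if_{k,D}' = 2k\cdot\bigl(-i\sum_Q Q'(z,1)/Q(z,1)^{k+1}\bigr)$, and both are right. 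Your route is marginally shorter once (ii) is in hand; the paper's route costs one more $\xi$-computation but produces \eqref{eq:xiomega} as a useful byproduct. A small polish: for the vanishing of the holomorphic part at $i\infty$, termwise decay alone is not enough for an infinite sum, but the bound $y\vt{Q'(z,1)} \leq 2\vt{Q(z,1)}$ (from Lemma \ref{lem:Qtricks} (iv) and (i)) reduces its absolute-value sum to $\tfrac{2}{y}\sum_Q \vt{Q(z,1)}^{-k}$, the same quantity controlling \eqref{eq:omegaestimate}, so your appeal to Proposition \ref{prop:omegaconv} does close this.
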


\begin{rmks}
\
\begin{enumerate}
\item Further motivation for $\omega_{k+1,D}$ arises from other functions of Zagier \cite{zagier77}*{eq.\ (48)}, work of the first author \cites{mon22, mon24}, and work of Bringmann and the first author \cite{brimo}.
\item Following Zagier, \cite{zagier75}*{p.\ 44}, the function
\begin{align*}
\omega_{k+1,0}(z) = \frac{1}{\zeta(k+1)} \sum_{\gamma \in \slz_{\infty} \backslash \slz} y^{-1}\big\vert_{k+1} \gamma
\end{align*}
is a non-holomorphic Eisenstein series. Here, $\zeta$ denotes the Riemann $\zeta$-function, the Petersson slash operator is defined in \eqref{eq:slashdef}, and $\slz_{\infty}$ is defined in \eqref{eq:gaminf}.
\end{enumerate}
\end{rmks}

In the $1980$'s, Kohnen and Zagier \cites{koza81, koza84} introduced the two-variable generating function
\begin{align*}
\Omega_k(\tau,z) \coloneqq \sum_{D > 0} D^{k-\frac{1}{2}} f_{k,D}(z) e^{2\pi i D \tau}
\end{align*}
of the $f_{k,D}$. This can be viewed as a ``hyperbolic analog'' of the functions $H_z(\tau)$ from \eqref{eq:Hzdef}. Employing a result by Vign{\'e}ras \cites{vign1, vign2} (see Theorem \ref{thm:Vigneras} and the subsequent discussion), one can show that $\Omega_k(\cdot,z)$ is a cusp form of weight $k+\frac{1}{2}$ for $\Gamma_0(4)$ in Kohnen's plus space (see Section~\ref{ModularFormsSection} for the definition) for fixed $z \in \H$. Alternatively, Kohnen \cite{koh85}*{Theorem 1} proved that
\begin{align*}
\Omega_k(\tau,z) = \frac{(-1)^{\frac{k}{2}}3(2\pi)^k}{(k-1)!} \sum_{m \geq 1} m^{k-1} \sum_{d \mid m} d^k P_{k+\frac{1}{2},d^2}^{+}(\tau) q^m,
\end{align*}
where $P_{k+\frac{1}{2},d^2}^{+}$ is the weight $k+\frac{1}{2}$ holomorphic Poincar{\'e} series of exponential type in the plus space (see \eqref{eq:Pplusdef}). Kohnen and Zagier proved that $\Omega_k$, utilized as a kernel function of a theta lift, realizes the famous Shimura \cite{shim} and Shintani \cite{shin} lifts. From this, they established that the even periods of $f_{k,D}$ are rational and deduced deep results on (twisted) central $L$-values of cusp forms. More recent work on such values can be found in work of Ehlen, Guerzhoy, Kane and the second author \cite{egkr} as well as of Males, Wagner and the first two authors \cite{mmrw}.

Imitating the construction of $\Omega_k$, we define
\begin{align*}
\Lambda_k(\tau,z) \coloneqq \sum_{D > 0} D^{k-\frac{1}{2}} \omega_{k+1,D}(z) e^{2\pi i D \tau}.
\end{align*}
Our second result shows that $\Lambda_k$ shares the aforementioned modularity properties with $\Omega_k$.
\begin{thm} \label{thm:IndefThetaMain}
The function $\Lambda_k(\cdot,z)$ is modular of weight $k + \frac{1}{2}$ for $\Gamma_0(4)$ in the plus space.
\end{thm}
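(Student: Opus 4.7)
The plan is to reduce the claim to the already-stated modularity of Kohnen--Zagier's generating function $\Omega_k(\cdot,z)$, by exploiting the splitting in Theorem \ref{thm:OmegaFirstThm}(ii). The key observation is that the holomorphic part of $\omega_{k+1,D}$ is a total $z$-derivative: from
\[
\frac{Q'(z,1)}{Q(z,1)^{k+1}} \;=\; -\frac{1}{k}\,\frac{\partial}{\partial z}\,Q(z,1)^{-k},
\]
summing over $Q \in \Qc_D$ and interchanging with $\partial/\partial z$ (justified by absolute and locally uniform convergence for $k>2$, cf.\ Proposition \ref{prop:omegaconv}), the splitting in Theorem \ref{thm:OmegaFirstThm}(ii) collapses to
\[
\omega_{k+1,D}(z) \;=\; \frac{i}{k}\,\frac{\partial f_{k,D}}{\partial z}(z) + \frac{1}{y}\,f_{k,D}(z).
\]

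Multiplying by $D^{k-\frac{1}{2}} e^{2\pi i D \tau}$, summing over $D>0$, and once more interchanging the $D$-sum with $\partial/\partial z$, one obtains the compact identity
\[
\Lambda_k(\tau,z) \;=\; \frac{i}{k}\,\frac{\partial \Omega_k}{\partial z}(\tau,z) \;+\; \frac{1}{y}\,\Omega_k(\tau,z).
\]
Both operations on the right-hand side act only on the variable $z$ and therefore commute with any slash action in the variable $\tau$. The theorem then follows immediately, since the modular transformation law of $\Omega_k(\cdot,z)$ in $\tau$ (weight $k+\frac{1}{2}$ for $\Gamma_0(4)$, established either via Vign\'eras' theorem as alluded to before the theorem, or via Kohnen's \cite{koh85}*{Theorem 1} explicit Fourier expansion) transfers verbatim to $\Lambda_k(\cdot,z)$. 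The plus-space condition is preserved automatically: the Fourier support of $\Lambda_k(\cdot,z)$ in $\tau$ coincides with that of $\Omega_k(\cdot,z)$, and $\Qc_D = \emptyset$ unless $D \equiv 0,1 \pmod 4$.

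The only non-trivial point is analytic: justifying the two successive interchanges of summation with $\partial/\partial z$ (first over $Q \in \Qc_D$, then over $D > 0$). Both reduce to uniform majorant estimates on compact subsets of $\H \times \H$, which in turn follow from the convergence bounds underlying Proposition \ref{prop:omegaconv}, combined with Zagier's classical growth estimates \cite{zagier75} on the Fourier coefficients of $f_{k,D}$. Once these routine checks are in place, the proof concludes in the few lines sketched above; I expect no deeper obstacle, since the half-integral weight modularity has effectively been outsourced to the known result for $\Omega_k$.
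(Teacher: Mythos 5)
Your proposal is correct, but it takes a genuinely different route from the paper. The paper proves the theorem by applying Vign\'eras' criterion (Theorem \ref{thm:Vigneras}) directly to the theta kernel: it takes $L=\Z^3$, $\mathfrak{q}(a,b,c)=b^2-4ac$, and $p(a,b,c)=D^{k-\frac12}Q_z/Q(z,1)^{k+1}$ for $D>0$ (and $0$ otherwise), checks the homogeneity $p(\sqrt{v}w)=v^{\frac{k-1}{2}}p(w)$, and verifies the differential equation $\bigl(E-\frac{\Delta^{(A)}}{4\pi}\bigr)p=(k-1)p$ by a symbolic computation. You instead observe that, by Lemma \ref{lem:Qtricks}(iv) and Theorem \ref{thm:OmegaFirstThm}(ii), one has $\omega_{k+1,D}=\frac{i}{k}f_{k,D}'+\frac{1}{y}f_{k,D}$ --- in other words $\omega_{k+1,D}=\frac{1}{2k}R_{2k}f_{k,D}$ with $R_{2k}=2i\frac{\partial}{\partial z}+\frac{2k}{y}$ the Maa{\ss} raising operator --- so that $\Lambda_k(\tau,z)=\frac{1}{2k}R_{2k,z}\,\Omega_k(\tau,z)$, and since this operator acts only in $z$ it commutes with the weight-$(k+\frac12)$ slash action in $\tau$; modularity and the plus-space support condition (using $\Qc_D=\emptyset$ unless $D\equiv 0,1\pmod 4$) are then inherited from $\Omega_k$. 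Your identity is consistent with the paper's own computation in the proof of Theorem \ref{thm:OmegaFirstThm}(iii), where $f_{k,D}'=\frac{ik}{y}f_{k,D}-ik\,\omega_{k+1,D}$ is derived, and the analytic interchanges you flag are indeed routine (Weierstrass' theorem applied to the locally uniformly convergent series of holomorphic functions). What your approach buys is brevity and a conceptual explanation of the ``twist'' arrow in Figure \ref{fig:figure1} as an honest raising-operator identity, avoiding the SAGE verification entirely. What it costs is self-containedness: the half-integral-weight modularity is entirely outsourced to the known modularity of $\Omega_k(\cdot,z)$ (Kohnen--Zagier), which is itself proved by the very Vign\'eras-type argument the paper carries out for $\Lambda_k$ directly; the paper's route also delivers the integrability and convergence conditions as part of the same package.
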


\begin{rmk}
In contrast to $\Omega_k$, our function $\Lambda_k$ is not holomorphic as a function of $z$.
\end{rmk}

Consequently, $\Lambda_k$ gives rise to a theta lift as well. Let $f$ be a weight $k+\frac{1}{2}$ cusp form for $\Gamma_0(4)$ satisfying Kohnen's plus space condition, and let $\langle \, \cdot \, ,\, \cdot\, \rangle$ be the usual Petersson inner product normalized by the index of the congruence subgroup (see \eqref{eq:peterssoninnerproduct}) . Then, we define the theta lift
\begin{align*}
I_{k+\frac{1}{2}}(z,f) \coloneqq \left\langle f, \Lambda_k\left(\cdot,-\overline{z}\right)\right\rangle.
\end{align*}
The Poincar{\'e} series $P_{k+\frac{1}{2},m}^{+}$ (see \eqref{eq:Pplusdef}) generate Kohnen's plus subspace within the space of weight $k+\frac{1}{2}$ cusp forms for $\Gamma_0(4)$ for varying $m \in \N$. Thus, it suffices to evaluate a theta lift on these series.
\begin{thm}\label{thm:ThetaLiftLambda}
Let $D \in \N$ be a discriminant. We have
\begin{align*}
I_{k+\frac{1}{2}}\left(z,P_{k+\frac{1}{2},D}^{+}\right) = \frac{\Gamma\left(k-\frac{1}{2}\right)}{6(4\pi)^{k-\frac{1}{2}}} \omega_{k+1,D}(z),
\end{align*}
where $\Gamma$ denotes the usual $\Gamma$-function.
\end{thm}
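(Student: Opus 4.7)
The plan is to unfold the Petersson inner product defining $I_{k+\frac{1}{2}}$ against the Poincaré series $P_{k+\frac{1}{2},D}^+$, reducing the computation to reading off a single Fourier coefficient of the kernel $\Lambda_k(\cdot,-\overline{z})$. By Theorem \ref{thm:IndefThetaMain}, this kernel lies in Kohnen's plus space of weight $k+\tfrac{1}{2}$ on $\Gamma_0(4)$, and since its $\tau$-expansion is supported on discriminants $D\ge 1$ it vanishes at the cusp $i\infty$; the plus-space condition then forces vanishing at the remaining cusps of $\Gamma_0(4)$, so $\Lambda_k(\cdot,-\overline{z})\in S_{k+\frac{1}{2}}^+(\Gamma_0(4))$ and the inner product converges absolutely.

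I would then invoke the standard reproducing identity for Kohnen's plus-space Poincaré series: for any cusp form $g\in S_{k+\frac{1}{2}}^+(\Gamma_0(4))$ with Fourier expansion $g(\tau)=\sum_{n\ge 1}a_g(n)e^{2\pi in\tau}$, one has
\begin{align*}
\left\langle g,P_{k+\frac{1}{2},D}^+\right\rangle = \frac{\Gamma\!\left(k-\frac{1}{2}\right)}{6(4\pi D)^{k-\frac{1}{2}}}\,a_g(D),
\end{align*}
where the factor $\tfrac{1}{6}=1/[\slz:\Gamma_0(4)]$ comes from the index-normalization of $\langle\,\cdot\,,\,\cdot\,\rangle$ and Kohnen's chosen normalization of $P_{k+\frac{1}{2},D}^+$ absorbs the plus-space projection constant. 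Applying this with $g=\Lambda_k(\cdot,-\overline{z})$ and combining with the Hermitian symmetry $\langle f,h\rangle=\overline{\langle h,f\rangle}$ yields
\begin{align*}
I_{k+\frac{1}{2}}\!\left(z,P_{k+\frac{1}{2},D}^+\right) = \frac{\Gamma\!\left(k-\frac{1}{2}\right)}{6(4\pi D)^{k-\frac{1}{2}}}\cdot D^{k-\frac{1}{2}}\,\overline{\omega_{k+1,D}(-\overline{z})}.
\end{align*}

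It remains to establish the reality relation $\overline{\omega_{k+1,D}(-\overline{z})}=\omega_{k+1,D}(z)$, which is precisely the symmetry that the conjugation $z\mapsto -\overline{z}$ in the definition of the theta lift is designed to exploit. The involution $Q=[a,b,c]\mapsto Q^{\ast}=[a,-b,c]$ on $\Qc_D$ satisfies $Q(-\overline{z},1)=\overline{Q^{\ast}(z,1)}$ and $Q_{-\overline{z}}=(Q^{\ast})_z\in\R$; re-indexing the defining sum by this bijection produces the claim. Substituting back and cancelling $D^{k-\frac{1}{2}}$ against the matching factor in the denominator gives the advertised constant $\Gamma(k-\tfrac{1}{2})/(6(4\pi)^{k-\frac{1}{2}})$.

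The main obstacle is not conceptual but bookkeeping: carefully pinning down each numerical factor. One must account for (i) the index factor $1/[\slz:\Gamma_0(4)]=1/6$ hidden in the normalized Petersson inner product; (ii) Kohnen's precise normalization of $P_{k+\frac{1}{2},D}^+$ from \cite{koh85}, which differs from the naive plus-space projection of the standard $\Gamma_0(4)$-Poincaré series by an explicit constant; and (iii) verifying cuspidality (and the resulting absolute convergence) of $\Lambda_k(\cdot,-\overline{z})$ at all three cusps of $\Gamma_0(4)$, needed to justify the unfolding. Everything else is, in effect, the same computation that gives the Shimura--Shintani-type lift for the Kohnen--Zagier kernel $\Omega_k$.
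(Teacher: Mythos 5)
Your proposal is correct and follows essentially the same route as the paper: unfold the inner product via the Petersson coefficient formula (the paper justifies the plus-space step by the self-adjointness of $\mathrm{pr}^+$, which is the fact hiding behind your "normalization absorbs the projection constant"), and then remove the conjugation via the involution $[a,b,c]\mapsto[a,-b,c]$ to get $\overline{\omega_{k+1,D}(-\overline{z})}=\omega_{k+1,D}(z)$. The only difference is cosmetic: you quote the reproducing identity as a black box where the paper carries out the unfolding and the $u$- and $v$-integrals explicitly.
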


\begin{rmk}
This suggests that the function $\Lambda_k$ might be viewed as a dual weight analog of a theta kernel studied by Bringmann, Kane, and Viazovska \cite{BKV}*{(1.6)} as well as by Bringmann, Kane, and Zwegers \cite{BKZ}*{p.\ 751}.
\end{rmk}

Analogously to Theorem \ref{thm:ThetaLiftLambda}, we also have
\begin{align} \label{eq:pickingfkD}
\left\langle \Omega_k\left(\cdot,z\right), P_{k+\frac{1}{2},D}^{+}\right\rangle = \frac{\Gamma\left(k-\frac{1}{2}\right)}{6(4\pi)^{k-\frac{1}{2}}} f_{k,D}(z)
\end{align}
and both results can be thought of as variants of the Petersson coefficient formula. Hence, the situation can be summarized as in Figure \ref{fig:figure1}.

\begin{figure}[htbp] \label{fig:figure1}
    \begin{center}
    \begin{tikzcd}[column sep=0.75in,row sep=0.75in]
        f_{k,D}\arrow[rr, "\textup{twist by }\frac{Q_z}{Q(z,1)}"] \arrow[d, bend left=20,"\textup{gen.\ func.}"] & & \omega_{k+1,D} \arrow[d, bend left=20,"\textup{gen.\ func.}"] \\
        \Omega_k \arrow[u, bend left=20,"\eqref{eq:pickingfkD}"] \arrow[rr, "\textup{twist by }\frac{Q_z}{Q(z,1)}"]& & \Lambda_k \arrow[u, bend left=20, "\text{Thm. } \ref{thm:ThetaLiftLambda}"]
    \end{tikzcd}
    \caption{Relations between $\omega_{k+1,D}$, $f_{k,D}$, and their generating functions}
    \end{center}
\end{figure}

\noindent Another connection between $\omega_{k+1,D}$ and $f_{k,D}$ can be found in \eqref{eq:xiomega}.

This paper is organized as follows. In Section \ref{sec:Prelims}, we summarize the framework related to the topics of this paper. Section \ref{sec:ProofOmegaFirstThm} is devoted to the proof of Theorem \ref{thm:OmegaFirstThm}.  Section \ref{sec:ProofThetaLiftLambda} gives the proofs of Theorems \ref{thm:IndefThetaMain} and \ref{thm:ThetaLiftLambda}.

\section*{Acknowledgements} The authors would like to thank Winfried Kohnen, Sander Zwegers, and Caner Nazaro\u{g}lu for helpful comments.

\section{Preliminaries}\label{sec:Prelims}

\subsection{Integral binary quadratic forms} 
Let $D \in \N$ be a discriminant. The group $\slz$ acts on $\Qc_D$ by
\begin{align*}
\left(Q \circ \gamma\right)(x,y) \coloneqq Q(ax + by, cx + dy), \qquad \gamma = \begin{pmatrix}a & b \\ c & d \end{pmatrix}.
\end{align*}
This action is compatible with the usual action 
\begin{align} \label{eq:Moebius}
\gamma z \coloneqq \frac{az+b}{cz+d}, \qquad j(\gamma, z) \coloneqq cz+d,
\end{align}
of $\slz$ on $\H$ by fractional linear transformations as follows.
\begin{lemma} \label{lem:ModularBehaviour}
Let $\gamma \in \slz$, $z \in \H$, and $Q \in \Qc_D$.
\begin{enumerate}[label={\rm (\roman*)}]
\item We have $Q\circ \gamma \in \Qc_D$.
\item We have
\begin{align*}
Q(\gamma z,1) = j(\gamma,z)^{-2}(Q \circ \gamma)(z,1).
\end{align*}
\item We have
\begin{align*}
Q_{\gamma z} = (Q \circ \gamma)_z.
\end{align*}
\end{enumerate}
\end{lemma}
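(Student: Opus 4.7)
The plan is to reduce all three parts to a single matrix computation. To each $Q = [a,b,c] \in \mathcal{Q}_D$ associate the symmetric matrix
\[
M_Q \coloneqq \begin{pmatrix} a & b/2 \\ b/2 & c \end{pmatrix}, \qquad \det M_Q = ac - \tfrac{b^2}{4} = -\tfrac{D}{4},
\]
so that $Q(X,Y) = (X,Y)\, M_Q\, (X,Y)^T$. A direct check using the definition $(Q\circ\gamma)(x,y) = Q(ax+by,cx+dy)$ shows that the column vector $(ax+by, cx+dy)^T$ equals $\gamma(x,y)^T$, hence
\[
M_{Q\circ\gamma} = \gamma^T M_Q \gamma.
\]
Since $\det\gamma = 1$, this has the same determinant as $M_Q$, proving (i). The matrix entries are visibly integral, so $Q\circ\gamma \in \mathcal{Q}_D$.

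For (ii), I would use the identity $\gamma(z,1)^T = (cz+d)(\gamma z, 1)^T$, equivalent to the formula $\gamma z = (az+b)/(cz+d)$. Plugging into $Q(\gamma z, 1) = (\gamma z, 1)\, M_Q\, (\gamma z, 1)^T$ and pulling out the factor $(cz+d)^{-2}$ yields
\[
Q(\gamma z, 1) = \frac{1}{(cz+d)^2}(z,1)\,\gamma^T M_Q \gamma\,(z,1)^T = j(\gamma,z)^{-2}\,(Q\circ\gamma)(z,1),
\]
which is exactly (ii).

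Part (iii) is the one requiring a small but crucial observation: rewriting $Q_z$ itself in bilinear form. A short calculation shows
\[
y\, Q_z \;=\; a z\bar z + \tfrac{b}{2}(z+\bar z) + c \;=\; (z,1)\, M_Q\, (\bar z,1)^T,
\]
so that $Q_z$ is the ``polarization'' of $Q(z,1)$ at $(z,\bar z)$ scaled by $y$. With this identity in hand, (iii) becomes a mirror image of (ii): one uses the same transformation $\gamma(z,1)^T = (cz+d)(\gamma z,1)^T$ together with its conjugate for $(\bar z, 1)$, and combines with $\operatorname{Im}(\gamma z) = y/|cz+d|^2$ to pick up the factor $|cz+d|^2$. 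All factors of $cz+d$ and $c\bar z + d$ cancel, leaving
\[
y\, Q_{\gamma z} = (z,1)\,\gamma^T M_Q \gamma\,(\bar z,1)^T = y\,(Q\circ\gamma)_z.
\]

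The main obstacle is spotting the bilinear expression for $y Q_z$; once that is in place, (i)--(iii) are all consequences of the single identity $M_{Q\circ\gamma} = \gamma^T M_Q \gamma$ combined with the transformation rules for $(z,1)^T$ and $\operatorname{Im}(\gamma z)$.
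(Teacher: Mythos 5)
Your proof is correct; the paper in fact states this lemma without proof (treating it as the standard compatibility of the two $\slz$-actions), and your argument via $M_{Q\circ\gamma}=\gamma^{T}M_Q\gamma$, the identity $\gamma(z,1)^{T}=j(\gamma,z)(\gamma z,1)^{T}$, and the polarization $yQ_z=(z,1)M_Q(\overline{z},1)^{T}$ is precisely the standard direct verification, carried out cleanly and in full. One cosmetic remark: the off-diagonal entries of $M_{Q\circ\gamma}$ are half-integers in general, so for part (i) the integrality claim should be phrased in terms of the coefficients of the form $Q\circ\gamma$ (which are visibly integers, being obtained by substituting integral linear forms into $Q$), not the matrix entries themselves.
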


In addition, we require the following identities, which each can be verified by direct computation.
\begin{lemma} \label{lem:Qtricks}
Let $Q \in \Qc_D$ and $z \in \H$.
\begin{enumerate}[label={\rm (\roman*)}]
\item We have 
\begin{align*}
Dy^2+Q_{z}^2y^2 = \vt{Q(z,1)}^2.
\end{align*}
\item We have 
\begin{align*}
2iy^2 \frac{\partial}{\partial\overline{z}} Q_{z} = Q(z,1).
\end{align*}
\item We have 
\begin{align*}
\frac{\partial}{\partial\overline{z}} \frac{y^2}{Q(\overline{z},1)} = \frac{iy^2Q_{z}}{Q(\overline{z},1)^2}.
\end{align*}
\item We have 
\begin{align*}
Q_z y + iyQ'(z,1) = Q(z,1).
\end{align*}
\end{enumerate}
\end{lemma}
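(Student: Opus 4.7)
The plan is to verify each of (i)--(iv) by direct calculation in real coordinates $z = x + iy$, using $|z|^2 = x^2 + y^2$, $2x = z+\overline{z}$, and $2iy = z - \overline{z}$, together with Wirtinger calculus for the antiholomorphic derivatives in (ii) and (iii). Tactically, I would prove (iv) first, since the algebraic identity it encodes underlies the cancellations needed in the other three parts.

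For (iv), I would expand $iy Q'(z,1) = iy(2az+b)$, substitute $iy = z - x$ to obtain $2az(z-x) + b(z-x)$, and then rewrite $2xz = z(z+\overline{z}) = z^2 + z\overline{z}$. Adding $Q_z y = az\overline{z} + bx + c$ then collapses the sum $Q_z y + iyQ'(z,1)$ to $az^2 + bz + c = Q(z,1)$. For (i), I would split $Q(z,1)$ into real and imaginary parts, noting that $Q_z y = \re Q(z,1) + 2ay^2$ and $\im Q(z,1) = (2ax+b)y$. Squaring $Q_z y$ and adding $Dy^2 = (b^2 - 4ac)y^2$ produces the cross term $4ay^2\,\re Q(z,1) + 4a^2y^4 + (b^2-4ac)y^2$, which simplifies to $(2ax+b)^2y^2 = (\im Q(z,1))^2$, so that $(Q_z y)^2 + Dy^2 = |Q(z,1)|^2$, as desired.

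For (ii), I would apply the Wirtinger derivatives $\frac{\partial}{\partial\overline{z}} y = \frac{i}{2}$, $\frac{\partial}{\partial\overline{z}}(z\overline{z}) = z$, and $\frac{\partial}{\partial\overline{z}} x = \frac{1}{2}$ to $Q_{z} = y^{-1}\bigl(az\overline{z} + \tfrac{b}{2}(z+\overline{z}) + c\bigr)$ via the product rule; multiplication by $2iy^2$ then produces an expression that reduces to $Q(z,1)$ by essentially the same substitution $iy = z-x$ used in (iv). For (iii), I would use the quotient rule together with $\frac{\partial}{\partial\overline{z}} Q(\overline{z},1) = Q'(\overline{z},1)$ and $\frac{\partial}{\partial\overline{z}} y^2 = iy$, reducing the identity to $i Q(\overline{z},1) - y Q'(\overline{z},1) = iyQ_{z}$, which is precisely (iv) applied with $\overline{z}$ in place of $z$ after complex conjugation (noting that $Q_{z}$ is real and that $Q$ has real coefficients, so $\overline{Q(z,1)} = Q(\overline{z},1)$ and $\overline{Q'(z,1)} = Q'(\overline{z},1)$). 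The only real obstacle is bookkeeping: each identity is elementary, but one must track the $(z,\overline{z})$ versus $(x,y)$ substitutions carefully to avoid sign errors, especially in (ii) where $\frac{\partial}{\partial\overline{z}}(1/y) = -i/(2y^2)$ contributes with a sign opposite to the product rule on the numerator.
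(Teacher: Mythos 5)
Your proposal is correct: the paper offers no proof of this lemma beyond the remark that each identity ``can be verified by direct computation,'' and your calculations supply exactly the intended verifications. All four parts check out, including the efficient reductions of (ii) and (iii) to (iv) (the latter via complex conjugation, using that $Q_z$ is real and $Q$ has real coefficients).
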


More details on the theory of integral binary quadratic forms can be found in Zagier’s book \cite{zagier81}, for example.

\subsection{Modular forms}\label{ModularFormsSection}
Let $\kappa \in \frac{1}{2}\Z$, $f \colon \H \to \C$ be smooth, $N \in \N$, and
\[
\Gamma_0(N) \coloneqq \left\{\begin{pmatrix} a & b \\ c & d \end{pmatrix} \in \slz \colon c \equiv 0 \pmod{N} \right\}.
\]
Let
\begin{align*}
\gamma = \begin{pmatrix} a & b \\ c & d \end{pmatrix} \in \Gamma, \qquad \Gamma \coloneqq
\begin{cases}
\slz & \text{if } \kappa \in \Z, \\
\Gamma_0(4) & \text{if } \kappa \in \Z + \frac{1}{2}.
\end{cases}
\end{align*}
Let $\left(\frac{c}{d}\right)$ be the Kronecker symbol and 
\begin{align*}
\varepsilon_d \coloneqq \begin{cases}
1 & \text{if } d \equiv 1 \pmod{4}, \\
i & \text{if } d \equiv 3 \pmod{4}
\end{cases}
\end{align*}
for every $\gamma \in \Gamma_0(4)$. The \emph{weight $\kappa$ Petersson slash operator} is given by
\begin{align} \label{eq:slashdef}
\left(f\vert_{\kappa}\gamma\right)(z) \coloneqq 
\begin{cases}
j(\gamma,z)^{-\kappa}f\left(\gamma z\right) &\text{if } \kappa \in \Z, \\[10pt]
\left(\frac{c}{d}\right) \varepsilon_{d}^{2\kappa}j(\gamma,z)^{-\kappa}f\left(\gamma z\right) & \kappa \in \Z + \frac{1}{2}.
\end{cases}
\end{align}

\begin{defn} \label{defn:modularforms}
Let $\kappa \in \frac{1}{2}\Z$ and $f \colon \H \to \C$ be smooth.
\begin{enumerate}[label=\rm(\arabic*)]
\item We call $f$ a \emph{holomorphic modular form} of weight $\kappa$ for $\Gamma$ if the following hold:
\begin{enumerate}[label=\rm(\alph*)]
\item For $\gamma \in \Gamma$ we have $f\vert_{\kappa}\gamma = f$,
\item we have that $f$ is holomorphic on $\H$,
\item we have that $f$ is holomorphic at all cusps of $\Gamma$.
\end{enumerate}
We denote the vector space of functions satisfying these conditions by $M_{\kappa}(\Gamma)$.
\item If $f$ vanishes at all cusps in addition, then we call $f$ a \emph{cusp form}. The space of cusp forms is denoted by $S_{\kappa}(\Gamma)$.
\item If $f$ satisfies the conditions (a) and (b) from (1) and is permitted to have a pole at the cusps, then we call $f$ a \emph{weakly holomorphic modular form} of weight ${\kappa}$. The vector space of such functions is denoted by $M_{\kappa}^!(\Gamma)$.
\item If $f$ satisfies the condition (a) from (1) but is permitted to have a pole at $i\infty$ as well as on $\H$, then we call $f$ a \emph{meromorphic modular form} of weight ${\kappa}$.
\end{enumerate}
We say that $f$ is in \emph{Kohnen's plus space} if $\kappa \in \Z + \frac{1}{2}$ and its Fourier coefficients (expanded about $i\infty$) are supported on discriminants (that is on indices $n$ satisfying $(-1)^{\kappa-\frac{1}{2}} n \equiv 0, 1 \pmod{4}$).
\end{defn}

We construct an example of (weakly) holomorphic modular forms. Let $m \in \Z$ and recall that $2 < k \in 2\N$. We note that the stabilizer of infinity is 
\begin{align} \label{eq:gaminf}
\Gamma_{\infty} = \left\{\pm\begin{pmatrix} 1 & n \\ 0 & 1 \end{pmatrix} \colon n \in \Z \right\},
\end{align}
and we define the classical \emph{Poincar{\'e} series of exponential type} by
\begin{align} \label{eq:Pdef}
P_{k,m}(z) \coloneqq \sum_{\gamma \in \Gamma_{\infty} \backslash \Gamma} e^{2\pi i m z} \big\vert_{k}\gamma.
\end{align}
It turns out that
\begin{align*}
P_{k,m} \in \begin{cases}
S_{k}\left(\slz\right) &\text{if } m > 0, \\
M_{k}\left(\slz\right) &\text{if } m = 0, \\
M_{k}^!\left(\slz\right) &\text{if } m < 0,
\end{cases}
\end{align*}
and $P_{k,m}$ spans $S_{k}\left(\slz\right)$ resp. $M_{k}^!\left(\slz\right)$ for $m \neq 0$ (see \cite{thebook}*{Theorems 6.8 and 6.9}). 

We let $\mathrm{pr}^+$ be Kohnen's projection operator to the plus space, which can be found in \cite{koh85}*{Proposition 3}. Then, we analogously define
\begin{align} \label{eq:Pplusdef}
P_{k+\frac{1}{2},m}^{+}(z) \coloneqq \mathrm{pr}^+ \sum_{\gamma \in \Gamma_{\infty} \backslash \Gamma} e^{2\pi i m z} \big\vert_{k+\frac{1}{2}}\gamma.
\end{align}
Adapting the proof for the integral weight case, one can show that $P_{k+\frac{1}{2},m}^{+}$ generates the plus space within $S_{k+\frac{1}{2}}\left(\Gamma_0(4)\right)$ for varying $m \in \N$.

More details on modular forms and their variants can be found in \cites{cohstr, the123}, for instance.

\subsection{Maa{\ss} forms}
We also require certain non-holomorphic modular forms. To introduce them, we define the \emph{Bruinier--Funke operator} \cite{brufu02}
\begin{align*}
\xi_{\kappa} \coloneqq 2iy^{\kappa} \overline{\frac{\partial}{\partial\overline{z}}},
\end{align*}
as well as the \emph{weight $\kappa$ hyperbolic Laplace operator}
\begin{align} \label{eq:Deltadef}
\Delta_{\kappa} \coloneqq - \xi_{2-\kappa}\xi_{\kappa} = -y^2 \left(\frac{\partial^2}{\partial x^2} + \frac{\partial^2}{\partial y^2}\right) + i\kappa y \left(\frac{\partial}{\partial x} + i\frac{\partial}{\partial y}\right).
\end{align}

\begin{defn} \label{defn:Maassforms}
Let  $f \colon \H \to \C$ be smooth. The function $f$ is called a \emph{weight $\kappa$ weak Maa{\ss} form of eigenvalue $\lambda$ for $\Gamma$} if
\begin{enumerate}[label={\rm (\alph*)}]
    \item we have $\left(f\vert_{\kappa}\gamma\right)(z) = f(z)$ for all $\gamma \in \Gamma$ and all $z \in \H$,
    \item we have $\Delta_{\kappa} f = \lambda f$ on $\H$,
    \item the function $f$ is of at most linear exponential growth towards all cusps.
\end{enumerate}
We say that $f$ is in Kohnen's plus space if $f$ satsifies the corresponding condition at the end of Definition \ref{defn:modularforms}.
\end{defn}

Let $f$, $g \colon \H \to \C$ be smooth and both transforming like modular forms of weight $\kappa \in \frac{1}{2}\Z$. Let $\dd\mu(z) \coloneqq \frac{\dd x\dd y}{y^2}$ be the usual hyperbolic measure on $\H$. Suppose that $f$ or $g$ vanish at all cusps. Then the \emph{Petersson inner product} of $f$ and $g$ is defined as
\begin{align} \label{eq:peterssoninnerproduct}
\langle f, g \rangle \coloneqq \frac{1}{\left[\slz \colon \Gamma\right]}\int_{\Gamma \backslash \H} f(z)\overline{g(z)}y^{k}\dd\mu(z),
\end{align}
where we have normalized it such that it does not depend on the choice of $\Gamma$. If $\Gamma = \Gamma_0(4)$ then the index of $\Gamma_0(4)$ in $\slz$ equals $6$ (see \cite{cohstr}*{Corollary 6.2.13} for instance). 

More details on the theory behind weak Maa{\ss} forms and variants thereof can be found in \cite{thebook}, for example.

\subsection{Indefinite theta functions}
Let $L \subseteq \R^n$ be a lattice. We define the \emph{Euler operator} 
\begin{align*}
    E \coloneqq \sum_{j=1}^nw_j\frac{\partial}{\partial w_j}, \qquad w \in L.
\end{align*}
We denote the Gram matrix associated to a non-degenerate quadratic form $\mathfrak{q}$ on $\R^n$ by $A$. The \emph{Laplace operator associated to $\mathfrak{q}$} is then defined by 
\begin{align*}
    \Delta^{(A)} \coloneqq \left\langle\frac{\partial}{\partial w},A^{-1}\frac{\partial}{\partial w}\right\rangle.
\end{align*}
Here, $\langle \, \cdot \, ,\, \cdot\, \rangle$ denotes the standard Euclidean inner product on $\R^n$.

\begin{thm}[Vign\'eras \cite{vign2}*{Th\'{e}or\`{e}me 1, p. 227}]\label{thm:Vigneras}
Suppose that $n\in \N$, $\mathfrak{q}$ is a nondegenerate quadratic form on $\R^n$, $L\subset \R^n$ is a lattice on which $\mathfrak{q}$ takes integer values, and $p \colon \R^n \to \C$ is a function satisfying the following conditions:
\begin{enumerate}[label={\rm (\roman*)}]
\item The function $f(w)\coloneqq p(w)e^{-2\pi \mathfrak{q}(w)}$ times any polynomial of degree at most $2$ and all partial derivatives of $f$ of order at most $2$ are elements of $L^2\left(\R^n\right)\cap L^1\left(\R^n\right)$.
\item For some $\lambda\in\Z$, the function $p$ satisfies
\begin{align*}
\left(E-\frac{\Delta^{(A)}}{4\pi}\right)p = \lambda p. 
\end{align*}
\end{enumerate}
Then the indefinite theta function ($\tau = u+iv$)
\begin{align*}
v^{-\frac{\lambda}{2}}\sum_{w\in L} p\left(\sqrt{v}w\right)e^{2\pi i \mathfrak{q}(w) \tau}
\end{align*}
transforms like a modular form of weight $\lambda+\frac{n}{2}$ for $\Gamma_0(N)$ and character $\chi\cdot \chi_{-4}^{\lambda}$, where $N$ and $\chi$ are the level and character of $\mathfrak{q}$ and $\chi_{-4}$ is the unique primitive Dirichlet character of conductor $4$.
\end{thm}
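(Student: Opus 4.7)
The plan is to verify the modular transformation law on a set of generators of $\Gamma_0(N)$, combining a trivial check at $T \colon \tau \mapsto \tau+1$ with a Poisson summation calculation at an inversion. Set $\Theta_p(\tau) \coloneqq v^{-\lambda/2}\sum_{w \in L} p(\sqrt{v}\,w) e^{2\pi i \mathfrak{q}(w)\tau}$. Condition (i) ensures that the series defining $\Theta_p$ and its Fourier-dual analogue over $L^*$ both converge absolutely and uniformly on compact subsets of $\H$, and justifies all term-by-term manipulations below. Invariance under $T$ is immediate from the hypothesis $\mathfrak{q}(L) \subseteq \Z$.

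For the inversion, fix $\tau = u+iv$, set $g_\tau(w) \coloneqq p(\sqrt{v}\,w) e^{2\pi i \mathfrak{q}(w)\tau}$, and apply Poisson summation to obtain
\begin{align*}
\Theta_p(\tau) = \frac{v^{-\lambda/2}}{\mathrm{vol}(\R^n/L)}\sum_{\xi \in L^*} \widehat{g_\tau}(\xi).
\end{align*}
The Fourier transform $\widehat{g_\tau}$ is computed in two stages. First, the Gaussian $e^{2\pi i \mathfrak{q}(w)\tau} = e^{-2\pi v \mathfrak{q}(w)}e^{2\pi i u \mathfrak{q}(w)}$ is handled by completing the square and applying the classical identity $\widehat{e^{-\pi \langle \cdot, A\cdot\rangle}}(\xi) = (\det A)^{-1/2} e^{-\pi\langle \xi, A^{-1}\xi\rangle}$; this contributes the weight $n/2$ Jacobian $(-i\tau)^{-n/2}$ and a dual Gaussian $e^{-2\pi i \mathfrak{q}^*(\xi)/\tau}$, where $\mathfrak{q}^*$ has Gram matrix $A^{-1}$. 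Second, the polynomial factor $p(\sqrt{v}\,\cdot)$ is analyzed. Partitioning $L^*$ into cosets of $L$ and evaluating the sum over $L^*/L$ by the standard Gauss-sum identity produces the level $N$ and the character $\chi$ associated to $\mathfrak{q}$.

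The main obstacle, and the precise role of hypothesis (ii), is to show that the polynomial piece of $\widehat{g_\tau}$ recombines into $\Theta_p$ evaluated at the transformed argument. The operator $E - \frac{1}{4\pi}\Delta^{(A)}$ is designed so that its action is intertwined (up to a sign) with the Gaussian-twisted Fourier transform $p \mapsto e^{2\pi \mathfrak{q}}\,\mathcal{F}[p\cdot e^{-2\pi \mathfrak{q}}]$; this can be verified directly from the identities $\mathcal{F}[\partial_j h] = 2\pi i \xi_j \mathcal{F}[h]$ and $\mathcal{F}[w_j h] = -\frac{1}{2\pi i}\partial_{\xi_j}\mathcal{F}[h]$. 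Consequently, an eigenfunction $p$ of eigenvalue $\lambda$ is sent by this twisted transform to a scalar multiple of itself, the scalar being a power of $i$ determined by $\lambda$. Tracking this through Poisson summation produces the extra Jacobian factor $\tau^{-\lambda}$ and the sign $i^\lambda$; the latter accumulates over inversions into the character $\chi_{-4}^\lambda$. Combining $T$-invariance with the computed inversion transformation generates the full $\Gamma_0(N)$ action with total weight $\lambda + n/2$ and total character $\chi \cdot \chi_{-4}^\lambda$, as claimed.
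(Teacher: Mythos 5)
First, a caveat: the paper does not prove this statement at all --- it is imported verbatim from Vign\'eras and used as a black box --- so your attempt can only be measured against the standard proof. Your skeleton (trivial check at $T$, Poisson summation for an inversion, Gauss sums producing the level and character) is the right one, but the two steps carrying all the content have genuine gaps. The ``two-stage'' computation of $\widehat{g_\tau}$ cannot be carried out as described: the Fourier transform of a product is a convolution, not a product of transforms, and --- more fatally --- for an \emph{indefinite} $\mathfrak{q}$ the Gaussian $e^{2\pi i \mathfrak{q}(w)\tau}$ grows exponentially on the negative cone, so it has no Fourier transform to which a ``classical identity'' could apply; only the full product $p(w)e^{2\pi i\mathfrak{q}(w)\tau}$ is integrable (this is precisely what hypothesis (i) guarantees), and its transform must be computed as a whole. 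Second, the pivotal deduction is a non sequitur: knowing that $E-\frac{1}{4\pi}\Delta^{(A)}$ is intertwined (even up to sign) with the Gaussian-twisted Fourier transform only shows that the transform maps the $\lambda$-eigenspace to a $(\pm\lambda)$-eigenspace; since these eigenspaces are infinite-dimensional, it does not follow that an eigenfunction is sent to a scalar multiple of itself. Even in the classical positive-definite case (harmonic polynomial times Gaussian) the analogous Hecke identity requires a genuine argument, not just intertwining. The actual mechanism in Vign\'eras' proof is that, after conjugation by the Gaussian, $E-\frac{1}{4\pi}\Delta^{(A)}$ is the infinitesimal generator of the compact direction of the Weil representation acting on $f_\tau(w)=p(\sqrt{v}\,w)e^{2\pi i\mathfrak{q}(w)\tau}$; hypothesis (ii) then becomes a first-order differential equation in the group variable whose integration produces the factor $j(\gamma,\tau)^{-\lambda-\frac{n}{2}}$. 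Without some version of this step your argument never actually manufactures the $\tau^{-\lambda}$ you assert.

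Two further points would need repair even granting the above. The multiplier system depends on the signature $(r,s)$ of $\mathfrak{q}$ through an eighth root of unity $e^{\pi i(r-s)/4}$ arising from the indefinite Gaussian integral; your bookkeeping via $(-i\tau)^{-n/2}$ and $i^{\lambda}$ is the positive-definite normalization and will not land exactly on $\chi\cdot\chi_{-4}^{\lambda}$ without it. And $\Gamma_0(N)$ is not generated by $T$ together with a single inversion for general $N$; the classical route is to apply Poisson summation separately for each $\gamma=\left(\begin{smallmatrix}a&b\\c&d\end{smallmatrix}\right)$ after splitting off the rational point $a/c$, which is where the Gauss sums and the character $\chi(d)$ actually enter, rather than a reduction to generators.
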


\begin{rmks}
\
\begin{enumerate}
        \item Note that $p$ has to satisfy both conditions globally. In other words, we require $p \in C^2\left(\R^n\right)$, namely $p$ is twice continuously differentiable on $\R^n$.
        \item Vign\'eras remarks that if $p$ satisfies $Ep = \lambda p$ and $\Delta^{(A)} p = 0$, then the theta series
        \[
			\sum_{w \in L} p(w) e^{2\pi i \mathfrak{q}(w) \tau}
		\]
        is holomorphic on $\H$. However, it may exhibit singularities at the cusps.
    \end{enumerate}
\end{rmks}

Following Vign\'{e}ras, a special case of (2) is (correcting a typo),
        \[
			p(w) = \frac{\mathfrak{q}(w)^{k-1}}{\langle w, s \rangle_{\mathfrak{q}}^{k+\frac{n}{2}-2}}
		\]
        where $k$ is any integer, $\langle \, \cdot \, ,\, \cdot\, \rangle_{\mathfrak{q}}$ is the bilinear form derived from $\mathfrak{q}$ and $s \in \C^n$ satisfies $\mathfrak{q}(s) = 0$ as well as $\langle s, \overline{s}\rangle_{\mathfrak{q}} > 0$. The function $\Omega_k(\tau,z)$ is an example of this situation by choosing $\mathfrak{q}(a,b,c) = b^2 - 4ac$, $s = \left(\frac{1}{2}, z, \frac{z^2}{2}\right)$ and $p(a,b,c) = 0$ for $\mathfrak{q}(a,b,c) < 0$. Note that
        \[
			\langle s, \overline{s}\rangle_{\mathfrak{q}} = \vt{z}^2 - \re\left(z^2\right) > 0.
		\]

\section{Proof of Theorem \ref{thm:OmegaFirstThm}}\label{sec:ProofOmegaFirstThm}

We begin by establishing that $\omega_{k+1,D}$ converges absolutely for $k > 2$.
\begin{prop} \label{prop:omegaconv}
If $k > 2$ then the function $\omega_{k+1,D}$ converges absolutely on $\H$ and uniformly towards $i\infty$.
\end{prop}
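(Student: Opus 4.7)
The plan is to majorize $\omega_{k+1,D}$ termwise and reduce both assertions to a direct counting argument on $\Qc_D$, paralleling Zagier's treatment of $f_{k,D}$ \cite{zagier75}*{Appendix 2}.

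First, I would apply Lemma \ref{lem:Qtricks}(i), which gives $|Q(z,1)|^2 = Dy^2 + Q_z^2 y^2 \geq Q_z^2 y^2$, hence $|Q_z|\,y \leq |Q(z,1)|$. This yields the termwise bound
\[
\left|\frac{Q_z}{Q(z,1)^{k+1}}\right| \leq \frac{1}{y\,|Q(z,1)|^k},
\]
so that $|\omega_{k+1,D}(z)| \leq y^{-1}\sum_{Q\in\Qc_D} |Q(z,1)|^{-k}$. It therefore suffices to establish absolute convergence of the latter series for $k > 2$, uniformly in $z$ on a neighborhood of $i\infty$.

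Next I would parameterize $\Qc_D$ by $(a,b) \in \Z^2$ with $c = (b^2 - D)/(4a)$ when $a \neq 0$; the $a = 0$ case contributes only finitely many values of $b$ (forced by $b^2 = D$, possible only when $D$ is a perfect square) and a standard one-dimensional sum over $c$. Using $|Q(z,1)|^2 = (Q_zy)^2 + Dy^2$ together with the identity $Q_zy = a(\alpha^2+y^2) - D/(4a)$ for $\alpha := x + b/(2a)$, a direct geometric count yields $\#\{Q \in \Qc_D \colon |Q(z,1)| \leq T\} \ll T^2/y^3$ for large $T$. Partial summation then gives
\[
\sum_{Q \in \Qc_D} |Q(z,1)|^{-k} \ll_k y^{-1-k} D^{(2-k)/2},
\]
which converges precisely when $k > 2$ and, combined with the previous paragraph, establishes absolute convergence of $\omega_{k+1,D}$ on $\H$. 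For uniform convergence towards $i\infty$, periodicity in $x$ reduces matters to the strip $\{z \colon y \geq Y_0,\, |x| \leq 1/2\}$, on which the counting bound above is uniform in $z$, so the tail of the majorizing series decays to zero uniformly.

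The main obstacle is the counting estimate $\#\{Q \in \Qc_D \colon |Q(z,1)| \leq T\} \ll T^2/y^3$; this is not subtle but requires careful bookkeeping through the parametrization $(a,b)\mapsto Q$ and the case split around $a = 0$. Once it is in hand, both parts of the proposition follow cleanly from the termwise bound afforded by Lemma \ref{lem:Qtricks}(i).
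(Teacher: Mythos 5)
Your proposal is correct, and its essential step coincides with the paper's: both proofs use Lemma \ref{lem:Qtricks}(i) to get $\vt{Q_z}\,y \leq \vt{Q(z,1)}$ and hence the termwise majorization $\vt{\omega_{k+1,D}(z)} \leq y^{-1}\sum_{Q}\vt{Q(z,1)}^{-k}$. Where you diverge is in handling the majorant: the paper simply observes that $\sum_{Q}\vt{Q(z,1)}^{-k}$ is exactly the series of absolute values of the terms of $f_{k,D}$, whose absolute convergence on $\H$ and uniform convergence towards $i\infty$ for $k>2$ is already established in Zagier's original paper, and stops there. You instead re-prove that convergence from scratch via the lattice-point count $\#\{Q \in \Qc_D \colon \vt{Q(z,1)} \leq T\} \ll T^2/y^3$ and partial summation. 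That route is sound (the count follows from $\vt{b} \ll T/y$, $\vt{a} \ll T/y^2$, and $c$ being determined by $a,b$, with the finitely many $a=0$ forms handled separately), and it has the merit of being self-contained and of producing the explicit decay rate $\ll_k D^{(2-k)/2}y^{-1-k}$ for the majorant; but it is work the paper avoids by citation, and as written the key counting estimate is asserted rather than proved, so you would still need to supply that bookkeeping to have a complete argument. Recognizing the majorant as the absolute-value series of $f_{k,D}$ would let you replace your entire second half with one sentence.
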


\begin{proof}
If $k$ is odd then $\omega_{k+1,D}$ vanishes identically by $Q \mapsto -Q$ and $(-Q)_z = -Q_z$. Hence, we suppose that $k \geq 4$ is even. By Lemma \ref{lem:Qtricks} (i), we infer
\begin{align*}
\vt{Q_{z}} = \sqrt{Q_{z}^2} = \sqrt{\frac{\vt{Q(z,1)}^2}{y^2} - D} \leq \sqrt{\frac{\vt{Q(z,1)}^2}{y^2}} = \frac{\vt{Q(z,1)}}{y}.
\end{align*}
Hence, we estimate
\begin{align} \label{eq:omegaestimate}
\vt{\omega_{k+1,D}(z)} \leq \sum_{Q \in \Qc_D} \frac{\vt{Q_{z}}}{\vt{Q(z,1)}^{k+1}} \leq \frac{1}{y} \sum_{Q \in \Qc_D} \frac{1}{\vt{Q(z,1)}^{k}}.
\end{align}
Since $f_{k,D}$ converges absolutely on $\H$ and uniformly towards $i\infty$ for $k > 2$ (see \cite{zagier75}*{p.\ 3}), the claim follows.
\end{proof}

Now, we are in position to prove our first main result.
\begin{proof}[Proof of Theorem \ref{thm:OmegaFirstThm}]
\
\begin{enumerate}[label={\rm (\roman*)}]
\item Modularity follows by Lemma \ref{lem:ModularBehaviour}. Indeed, let $\gamma \in \slz$. Then, we have
\begin{align*}
\hspace*{\leftmargini} \omega_{k+1,D}(\gamma z) &= \sum_{Q \in \Qc_D} \frac{Q_{\gamma z}}{Q(\gamma z,1)^{k+1}} = \sum_{Q \in \Qc_D} \frac{(Q \circ \gamma)_{z}}{j(\gamma,z)^{-2k-2}(Q\circ\gamma)(z,1)^{k+1}} \\
&= j(\gamma,z)^{2k+2}\omega_{k+1,D}(z),
\end{align*}
where we used absolute convergence in the last step. Next, we compute the eigenvalue under the Laplace operator. By Lemma \ref{lem:Qtricks} (ii), we have
\begin{align*}
\frac{\partial}{\partial\overline z} \frac{Q_{ z}}{Q( z,1)^{k+1}} =
\frac{1}{Q( z,1)^{k+1}} \frac{\partial}{\partial\overline z}Q_{ z} = \frac{1}{2iy^2} \frac{1}{Q( z,1)^{k}},
\end{align*}
and hence
\begin{align} \label{eq:xiomega}
\xi_{2k+2}\left(\frac{Q_{z}}{Q(z,1)^{k+1}}\right) = -\frac{y^{2k}}{Q(\overline{z},1)^{k}}.
\end{align}
Moreover, Lemma \ref{lem:Qtricks} (iii) yields
\begin{align*}
\frac{\partial}{\partial\overline{z}} \frac{y^{2k}}{Q(\overline{z},1)^{k}} = \frac{\partial}{\partial\overline{z}} \left(\frac{y^2}{Q(\overline{z},1)}\right)^k = k \left(\frac{y^2}{Q(\overline{z},1)}\right)^{k-1} \frac{iy^2Q_{z}}{Q(\overline{z},1)^2} = ik\frac{y^{2k}Q_{z}}{Q(\overline{z},1)^{k+1}},
\end{align*}
and we infer ($Q_z \in \R$)
\begin{align*}
\xi_{-2k} \left(\frac{y^{2k}}{Q(\overline{z},1)^{k}}\right) = 2k \frac{Q_{z}}{Q(z,1)^{k+1}}.
\end{align*}
By absolute convergence, we may argue termwise getting
\begin{align*}
\hspace*{\leftmargini} \Delta_{2k+2}\omega_{k+1,D}(z) = -\xi_{-2k}\xi_{2k+2}\omega_{k+1,D}(z) = \sum_{Q\in\Qc_D} \xi_{-2k} \frac{y^{2k}}{Q(\overline{z},1)^{k}} = 2k\omega_{k+1,D}(z).
\end{align*}
The last assertion follows by \eqref{eq:omegaestimate} and the fact that $f_{k,D}$ is a cusp form.

\item The decomposition follows directly by Lemma \ref{lem:Qtricks} (iv).

\item We define
\begin{align*}
g(z) \coloneqq \frac{f_{k,D}(z)}{c_{f_{k,D}}}
\end{align*}
By \cite{brukoon}*{Theorem 1.1} (see \eqref{eq:brukoon}), we have
\begin{align*}
g^{\div} (z) = -\frac{1}{2\pi i} \frac{\frac{\dd}{\dd z}g(z)}{g(z)} + \frac{2k}{12}E_2(z) = -\frac{1}{2\pi i} \frac{\frac{\dd}{\dd z}f_{k,D}(z)}{f_{k,D}(z)} + \frac{k}{6}E_2(z).
\end{align*}
Moreover, part (ii) yields
\begin{align*}
\frac{\dd}{\dd z}f_{k,D}(z) &= -k \sum_{Q \in \Qc_D} \frac{Q'(z,1)}{Q(z,1)^{k+1}} = \frac{ik}{y}f_{k,D}(z) - ik\omega_{k+1,D}(z),
\end{align*}
from which we infer
\begin{align*}
g^{\div} (z) & = -\frac{1}{2\pi i} \left(\frac{ik}{y} - ik\frac{\omega_{k+1,D}(z)}{f_{k,D}(z)}\right) + \frac{k}{6}E_2(z).
\end{align*}
The claim follows by the relation
\begin{align*}
g^{\div} (z) = \frac{f_{k,D}^{\div}(z)}{c_{f_{k,D}}}
\end{align*}
and the definition of $E_2^*$. \qedhere
\end{enumerate}
\end{proof}

\section{Proofs of Theorems \ref{thm:IndefThetaMain} and \ref{thm:ThetaLiftLambda}} \label{sec:ProofThetaLiftLambda}

\begin{proof}[Proof of Theorem \ref{thm:IndefThetaMain}]
We will employ Theorem \ref{thm:Vigneras}. To this end, we choose $N=4$, $L = \Z^3$ and $\mathfrak{q}(a,b,c) \coloneqq b^2-4ac$ with associated Gram matrix
\begin{align*}
A = \begin{pmatrix} 0 & 0 & -4 \\ 0 & 2 & 0 \\ -4 & 0 & 0 \end{pmatrix}, \qquad
A^{-1} = \begin{pmatrix} 0 & 0 & -\frac{1}{4} \\ 0 & \frac{1}{2} & 0 \\ -\frac{1}{4} & 0 & 0 \end{pmatrix}.
\end{align*}
We define
\begin{align*}
p(a,b,c) \coloneqq \begin{cases} \left(b^2-4ac\right)^{k-\frac{1}{2}}\frac{\frac{1}{y}\left(a\vt{z}^2+bx+c\right)}{\left(az^2+bz+c\right)^{k+1}} & \text{if } D = b^2-4ac > 0, \\
0 & \text{if } D = b^2-4ac \leq 0.
\end{cases}
\end{align*}
Then $p$ is twice continuously differentiable at $D=0$ and has no singularities as a function of $(a,b,c)$ since $z \in \H$, which implies that $p \in C^2\left(\R^3\right)$. The first condition in Vign\'eras' theorem and absolute convergence of $\Lambda_k$ follow by the exponential decay of $\omega_{k+1,D}$ towards all cusps, see Theorem \ref{thm:OmegaFirstThm} (i), as well as combining \eqref{eq:omegaestimate} with absolute convergence of Kohnen and Zagier's kernel $\Omega_k$. Moreover, we have
\begin{align*}
p\left(\sqrt{v}a,\sqrt{v}b,\sqrt{v}c\right) = v^{k-\frac{1}{2}+\frac{1}{2}-\frac{k+1}{2}}p(a,b,c) = v^{\frac{k-1}{2}}p(a,b,c).
\end{align*}
Noting that 
\begin{align*}
\frac{\partial}{\partial a}\frac{\partial}{\partial c} p = \frac{\partial}{\partial c}\frac{\partial}{\partial a} p
\end{align*}
by Schwarz's theorem, we have
\begin{align*}
E = a\frac{\partial}{\partial a}+b\frac{\partial}{\partial b}+c\frac{\partial}{\partial c}, \qquad 
\Delta^{(A)} = - \frac{1}{2}\frac{\partial}{\partial a}\frac{\partial}{\partial c} + \frac{1}{2}\frac{\partial^2}{\partial b^2}.
\end{align*}
To verify the second condition, we need to show that $p(a,b,c)$ is an eigenfunction of $E-\frac{\Delta^{(A)}}{4\pi}$ with eigenvalue $\lambda = k-1$. This can be verified with SAGE \cite{sage} for example, which we defer to Appendix \ref{app:appendix}.

Thus, Theorem \ref{thm:Vigneras} applies. By construction, we observe that
\begin{align*}
v^{-\frac{\lambda}{2}} \sum_{w \in \Z^3} p\left(\sqrt{v}w\right) e^{2\pi i q(w)\tau} = \Lambda_k(\tau,z),
\end{align*}
which completes the proof.
\end{proof}

It remains to prove Theorem \ref{thm:ThetaLiftLambda}.
\begin{proof}[Proof of Theorem \ref{thm:ThetaLiftLambda}]
This follows by the Petersson coefficient formula. We provide a proof for convenience of the reader. As stated in \cite{BKV}*{(2.5)}, $\mathrm{pr}^+$ is Hermitian with respect to the Petersson inner product. Since $k$ is even, $\Lambda_k(\tau,z)$ satisfies the plus space condition as a function of $\tau$. Hence, we obtain
\begin{align*}
\left\langle P_{k+\frac{1}{2},D}^{+}, \Lambda_k\left(\cdot,-\overline{z}\right)\right\rangle = \frac{1}{6} \int_{\Gamma_0(4) \backslash \H} P_{k+\frac{1}{2}, D}(\tau) \overline{\Lambda_k\left(\tau,-\overline{z}\right)}  v^{k+\frac{1}{2}} \dd\mu(\tau).
\end{align*}
Writing $Q = [a,b,c]$, we have
\begin{align*}
\overline{\left([a,b,c]_{-\overline{z}}\right)} = [a,b,c]_{-\overline{z}} = [a,-b,c]_z, \qquad \overline{\left([a,b,c]\left(-\overline{z},1\right)\right)} = [a,b,c](-z,1) = [a,-b,c](z,1).
\end{align*}
Hence, we map $b \mapsto -b$ in each sum running over $Q = [a,b,c] \in \Qc_D$. According to Theorem \ref{thm:IndefThetaMain}, $\tau \mapsto \Lambda_k\left(\tau,-\overline{z}\right)$ is modular of weight $k+\frac{1}{2}$ for $\Gamma_0(4)$. Unfolding and inserting the definitions of both functions subsequently yields
\begin{align*}
\left\langle P_{k+\frac{1}{2},D}^{+}, \Lambda_k\left(\cdot,-\overline{z}\right)\right\rangle = \frac{1}{6} \sum_{d > 0} d^{k-\frac{1}{2}}\omega_{k+1,d}(z) \int_{0}^{\infty} \int_{0}^{1} e^{2 \pi i D\tau} e^{-2\pi i d \overline{\tau}} v^{k+\frac{1}{2}} \frac{\dd u\dd v}{v^2}.
\end{align*}
As usual, the integral over $u$ picks the $D$-th coefficient. That is, we infer
\begin{align*}
\left\langle P_{k+\frac{1}{2},D}^{+}, \Lambda_k\left(\cdot,-\overline{z}\right)\right\rangle = \frac{1}{6} D^{k-\frac{1}{2}}\omega_{k+1,D}(z) 
\int_{0}^{\infty} v^{k+\frac{1}{2}} e^{-4 \pi D v} \frac{\dd v}{v^2},
\end{align*}
and the claim follows directly.
\end{proof}

\appendix

\section{SAGE code} \label{app:appendix}

We provide the SAGE \cite{sage} code used in the proof of Theorem \ref{thm:IndefThetaMain}.

\begin{figure}[htbp]
	\begin{lstlisting}[language=Sage]
	sage: var('a', 'b', 'c', 'k', 'x','y');
	sage: p = (b^2-4*a*c)^(k-1/2)*(1/y*(a*(x^2+y^2)+b*x+c))/
	....: (a*(x+I*y)^2+b*(x+I*y)+c)^(k+1);
	sage: euler = a*p.derivative(a)+b*p.derivative(b)
	....: +c*p.derivative(c);
	sage: delt = -1/2*(p.derivative(c)).derivative(a)
	....: +1/2*(p.derivative(b)).derivative(b);
	sage: vign = euler-1/(4*pi)*delt;
	sage: (vign/p).full_simplify()
	k - 1
	\end{lstlisting}
	\caption{Verification of Vign{\'e}ras' differential equation}
\end{figure}

\begin{bibsection}
\begin{biblist}
\bib{askani}{article}{
   author={Asai, T.},
   author={Kaneko, M.},
   author={Ninomiya, H.},
   title={Zeros of certain modular functions and an application},
   journal={Comment. Math. Univ. St. Paul.},
   volume={46},
   date={1997},
   number={1},
   pages={93--101},
}

\bib{beng13}{thesis}{
   author={Bengoechea, P.},
   title={Corps quadratiques et formes modulaires},
   type={Ph.D. Thesis},
   organization={Universit{\'e} Pierre et Marie Curie},
   date={2013},
}

\bib{thebook}{book}{
    AUTHOR = {Bringmann, K.},
    AUTHOR = {Folsom, A.},
    AUTHOR = {Ono, K.},
    AUTHOR = {Rolen, L.},
     TITLE = {Harmonic {M}aass forms and mock modular forms: theory and
              applications},
    SERIES = {American Mathematical Society Colloquium Publications},
    VOLUME = {64},
 PUBLISHER = {American Mathematical Society, Providence, RI},
      YEAR = {2017},
     PAGES = {xv+391},
}

\bib{brikaloeonro}{article}{
   author={Bringmann, K.},
   author={Kane, B.},
   author={L\"{o}brich, S.},
   author={Ono, K.},
   author={Rolen, L.},
   title={On divisors of modular forms},
   journal={Adv. Math.},
   volume={329},
   date={2018},
   pages={541--554},
}

\bib{BKV}{article}{
   author={Bringmann, K.},
   author={Kane, B.},
   author={Viazovska, M.},
   title={Theta lifts and local Maass forms},
   journal={Math. Res. Lett.},
   volume={20},
   date={2013},
   number={2},
   pages={213--234},
}

\bib{BKZ}{article}{
   author={Bringmann, K.},
   author={Kane, B.},
   author={Zwegers, S.},
   title={On a completed generating function of locally harmonic Maass
   forms},
   journal={Compos. Math.},
   volume={150},
   date={2014},
   number={5},
   pages={749--762},
}

\bib{brimo}{webpage}{
   author={Bringmann, K.},
   author={Mono, A.},
   title={A modular framework of functions of Knopp and indefinite binary quadratic forms},
   url={http://arxiv.org/abs/2208.01451},
   note={preprint},
   year={2022},
}

\bib{brufu02}{article}{
   author={Bruinier, J. H.},
   author={Funke, J.},
   title={On two geometric theta lifts},
   journal={Duke Math. J.},
   volume={125},
   date={2004},
   number={1},
   pages={45--90},
}

\bib{the123}{collection}{
   author={Bruinier, J. H.},
   author={van der Geer, G.},
   author={Harder, G.},
   author={Zagier, D.},
   title={The 1-2-3 of modular forms},
   series={Universitext},
   note={Lectures from the Summer School on Modular Forms and their
   Applications held in Nordfjordeid, June 2004;
   Edited by Kristian Ranestad},
   publisher={Springer-Verlag, Berlin},
   date={2008},
}

\bib{brukoon}{article}{
   author={Bruinier, J. H.},
   author={Kohnen, W.},
   author={Ono, K.},
   title={The arithmetic of the values of modular functions and the divisors
   of modular forms},
   journal={Compos. Math.},
   volume={140},
   date={2004},
   number={3},
   pages={552--566},
}

\bib{cohstr}{book}{
   author={Cohen, H.},
   author={Str\"{o}mberg, F.},
   title={Modular forms},
   series={Graduate Studies in Mathematics},
   volume={179},
   note={A classical approach},
   publisher={American Mathematical Society, Providence, RI},
   date={2017},
   pages={xii+700},
}

\bib{egkr}{article}{
   author={Ehlen, S.},
   author={Guerzhoy, P.},
   author={Kane, B.},
   author={Rolen, L.},
   title={Central $L$-values of elliptic curves and local polynomials},
   journal={Proc. Lond. Math. Soc. (3)},
   volume={120},
   date={2020},
   number={5},
   pages={742--769},
}

\bib{IOS}{article}{
   author={Imamo\={g}lu, \"{O}.},
   author={O'Sullivan, C.},
   title={Parabolic, hyperbolic and elliptic Poincar\'{e} series},
   journal={Acta Arith.},
   volume={139},
   date={2009},
   number={3},
   pages={199--228},
}

\bib{katok}{article}{
   author={Katok, S.},
   title={Closed geodesics, periods and arithmetic of modular forms},
   journal={Invent. Math.},
   volume={80},
   date={1985},
   number={3},
   pages={469--480},
}

\bib{koh85}{article}{
   author={Kohnen, W.},
   title={Fourier coefficients of modular forms of half-integral weight},
   journal={Math. Ann.},
   volume={271},
   date={1985},
   number={2},
   pages={237--268},
}

\bib{koza81}{article}{
   author={Kohnen, W.},
   author={Zagier, D.},
   title={Values of $L$-series of modular forms at the center of the
   critical strip},
   journal={Invent. Math.},
   volume={64},
   date={1981},
   number={2},
   pages={175--198},
}

\bib{koza84}{article}{
   author={Kohnen, W.},
   author={Zagier, D.},
   title={Modular forms with rational periods},
   conference={
      title={Modular forms},
      address={Durham},
      date={1983},
   },
   book={
      series={Ellis Horwood Ser. Math. Appl.: Statist. Oper. Res.},
      publisher={Horwood, Chichester},
   },
   date={1984},
   pages={197--249},
}

\bib{mmrw}{webpage}{
   author={Males, J.},
   author={Mono, A.},
   author={Rolen, L.},
   author={Wagner, I.},
   title={Central $L$-values of newforms and local polynomials},
   year={2023},
   url={https://arxiv.org/abs/2306.15519},
   note={preprint},
}

\bib{mon22}{article}{
	author={Mono, A.},
	title={Eisenstein series of even weight $k\geq2$ and integral binary
		quadratic forms},
	journal={Proc. Amer. Math. Soc.},
	volume={150},
	date={2022},
	number={5},
	pages={1889--1902},
}

\bib{mon24}{article}{
   author={Mono, A.},
   title={Locally harmonic Maass forms of positive even weight},
   journal={Israel J. Math.},
   volume={261},
   date={2024},
   number={2},
   pages={671--694},
}

\bib{pet43}{article}{
   author={Petersson, H.},
   title={Ein Summationsverfahren f\"ur die Poincar\'eschen Reihen von der
   Dimension --2 zu den hyperbolischen Fixpunktepaaren},
   language={German},
   journal={Math. Z.},
   volume={49},
   date={1944},
   pages={441--496},
}

\bib{shim}{article}{
   author={Shimura, G.},
   title={On modular forms of half integral weight},
   journal={Ann. of Math. (2)},
   volume={97},
   date={1973},
   pages={440--481},
}

\bib{shin}{article}{
   author={Shintani, T.},
   title={On construction of holomorphic cusp forms of half integral weight},
   journal={Nagoya Math. J.},
   volume={58},
   date={1975},
}

\bib{vign1}{article}{
   author={Vign\'{e}ras, M.-F.},
   title={S\'{e}ries th\^{e}ta des formes quadratiques ind\'{e}finies},
   language={French},
   conference={
      title={S\'{e}minaire Delange-Pisot-Poitou, 17e ann\'{e}e (1975/76), Th\'{e}orie
      des nombres: Fasc. 1, Exp. No. 20},
   },
   book={
      publisher={Secr\'{e}tariat Math., Paris},
   },
   date={1977},
   pages={3},
}

\bib{vign2}{article}{
    author= {Vign\'{e}ras, M.-F.},
    title={S\'{e}ries th\^{e}ta des formes quadratiques ind\'{e}finies},
    book={
        title={Modular Functions of One Variable VI. Lecture Notes in Mathematics},
        volume={627},
        editor={Serre, J.-P.},
        editor={Zagier, D.},
        publisher={Springer},
        place={Berlin, Heidelberg},
        year={1977}
        },
    pages={227--239}
}

\bib{zagier75}{article}{
	author={Zagier, D.},
	title={Modular forms associated to real quadratic fields},
	journal={Invent. Math.},
	volume={30},
	date={1975},
	number={1},
	pages={1--46},
}

\bib{zagier77}{article}{
   author={Zagier, D.},
   title={Modular forms whose Fourier coefficients involve zeta-functions of quadratic fields},
   book={
      series={Modular Forms of One Variable},
      volume={VI},
      publisher={Springer},
   },
   date={1977},
   pages={105--169},
}

\bib{zagier81}{book}{
   author={Zagier, D.},
   title={Zetafunktionen und quadratische K\"{o}rper},
   language={German},
   series={Hochschultext [University Textbooks]},
   note={Eine Einf\"{u}hrung in die h\"{o}here Zahlentheorie. [An introduction to
   higher number theory]},
   publisher={Springer-Verlag, Berlin-New York},
   date={1981},
}

\bib{zagier02}{article}{
   author={Zagier, D.},
   title={Traces of singular moduli},
   conference={
      title={Motives, polylogarithms and Hodge theory, Part I},
      address={Irvine, CA},
      date={1998},
   },
   book={
      series={Int. Press Lect. Ser.},
      volume={3, I},
      publisher={Int. Press, Somerville, MA},
   },
   isbn={1-57146-090-X},
   date={2002},
   pages={211--244},
}

\bib{sage}{misc}{
  author = {W. A. Stein et al.},
  title = {Sage Mathematics Software},
  note = {The Sage Development Team, Version 9.3, \url{https://www.sagemath.org/}},
  year= {2022},
}
\end{biblist}
\end{bibsection}

\end{document}